\newtheorem{thm}{Theorem}
\newtheorem{question}[thm]{Question}
\newtheorem{prop}[thm]{Proposition}
\newtheorem{remark}[thm]{Remark}
\newcommand{\nc}{\newcommand}
\nc{\rnc}{\renewcommand}
\rnc{\P}{\mathbb P}
\nc{\R}{\mathbb R}
\nc{\C}{\mathbb C}
\nc{\A}{\mathbb A}
\nc{\Q}{\mathbb Q}
\nc{\Z}{\mathbb Z}
\rnc{\O}{\mathcal O}
\nc{\LL}{\mathbb L}
\nc{\Hom}{\text{Hom}}
\nc{\codim}{\text{codim}}
\nc{\Sym}{\text{Sym}}
\nc{\Spec}{\text{Spec}\,}
\nc{\End}{\text{End}}
\nc{\eps}{\epsilon}
\nc{\Pic}{\text{Pic}}
\nc{\ov}{\overline}
\nc{\F}{\mathfrak F}
\nc{\G}{\mathbb G}
\nc{\E}{\mathcal E}
\rnc{\S}{\mathcal S}
\rnc{\L}{\mathcal L}
\rnc{\H}{\mathbb H}
\nc{\Bl}{\text{Bl}}
\nc{\im}{\text{im}}
\nc{\gr}{\text{gr}}
\nc{\m}{\mathfrak m}
\nc{\us}{\underset}
\nc{\ul}{\underline}
\nc{\bs}{\backslash}
\nc{\os}{\overset}
\nc{\Tot}{\text{Tot}}
\nc{\Mod}{\text{Mod}}
\nc{\QMod}{\text{QMod}}
\nc{\MW}{\text{MW}}
\nc{\NS}{\text{NS}}
\nc{\Res}{\text{Res}}
\nc{\Aut}{\text{Aut}}
\nc{\W}{\mathcal W}
\nc{\NL}{\text{NL}}
\nc{\mult}{\text{mult}}
\nc{\U}{\mathscr U}
\nc{\D}{\mathscr D}
\nc{\M}{\mathcal M}
\rnc{\v}{{\langle v \rangle}}
\nc{\Jac}{\text{Jac}}
\nc{\J}{\ov{\mathcal J}}
\title{A Lagrangian sphere which is not a vanishing cycle}
\author{Fran\c{c}ois Greer}
\begin{document}
\maketitle
\begin{abstract}
We give examples of Calabi-Yau threefolds containing Lagrangian spheres which are not vanishing cycles of nodal degenerations, answering a question of Donaldson in the negative.
\end{abstract}

\section{Introduction}
\noindent
The $n$-dimensional nodal singularity has a 1-parameter versal deformation
$$\left(\sum_{i=0}^n z_i^2 = t\right) \subset \C^{n+1}\times \C_t.$$
The nearby fiber over $t=\eps>0$ retracts onto an $n$-sphere, the vanishing cycle:
$$S^n\simeq \left(\sum_{i=0}^n x_i^2 = \eps\right),$$
which is Lagrangian with respect to the standard symplectic form $\omega = \sum dx_i\wedge dy_i$ ($z_i = x_i+y_i\sqrt{-1}$).  A natural question, first raised by Donaldson \cite{donaldson}, is whether all Lagrangian spheres arise in this way.
\begin{question}
Let $Z$ be a complex projective manifold, and $L\subset Z$ embedded sphere, Lagrangian with respect to a K\"{a}hler form on $Z$.  Is $L$ always the vanishing cycle of a nodal degeneration of $Z$?
\end{question}\label{donaldson}
\noindent
The answer is positive for curves, and unknown for surfaces.  For certain K3 surfaces, the answer is positive \cite{sheridan}, modulo the issue of whether Fukaya isomorphism implies Hamiltonian isotopy.  For Horikawa surfaces, a positive answer would distinguish two particular deformation types as smooth manifolds \cite{auroux}.  We show that the answer to Question \ref{donaldson} is negative in general:
\begin{thm}
There exists a rigid projective Calabi-Yau threefold $\hat{X}$ with a Lagrangian sphere $L\subset \hat{X}$ which is homologically non-trivial (essential).
\end{thm}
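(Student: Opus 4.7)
The argument splits into a soft deformation-theoretic obstruction and an explicit geometric construction.

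\emph{Soft obstruction.} I claim that on any rigid projective CY threefold a homologically essential Lagrangian $S^3$ cannot be a vanishing cycle. Suppose, for contradiction, that $L\subset \hat X$ were the vanishing cycle of a projective nodal degeneration $\pi\colon \mathcal X\to \Delta$ with $\mathcal X_{t^*}\cong \hat X$ for some $t^*\ne 0$. The Kodaira--Spencer class of $\pi$ at $t^*$ lies in $H^1(\hat X, T_{\hat X})\cong H^{2,1}(\hat X)=0$; combined with Bogomolov--Tian--Todorov unobstructedness, this forces $\pi|_{\Delta^*}$ to be a holomorphic $\hat X$-bundle over $\Delta^*$, whose structure reduces to the monodromy $\phi\in \Aut(\hat X)$. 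Using the CY identification $T_{\hat X}\cong \Omega^2_{\hat X}$ one has $H^0(T_{\hat X})=H^{2,0}(\hat X)=0$, so $\Aut(\hat X)$ is a $0$-dimensional projective algebraic group, hence finite, and $\phi$ has finite order. On the other hand Picard--Lefschetz gives the induced action on $H^3(\hat X,\Q)$ as the unipotent map $\alpha\mapsto \alpha\pm (\alpha\cdot [L])[L]$, which has infinite order whenever $[L]\neq 0$ (Poincar\'e duality produces an $\alpha$ with $\alpha\cdot[L]\neq 0$). Contradiction.

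\emph{Construction.} The theorem thus reduces to exhibiting a rigid projective CY threefold $\hat X$ together with an essential Lagrangian $S^3\subset \hat X$. I would look among rigid CY threefolds with useful extra structure: Schoen-type fiber products $Y_1\times_{\P^1}Y_2$ of extremal rational elliptic surfaces, where rigidity can be arranged by the choice of singular-fiber types; Borcea--Voisin threefolds $(S\times E)/\sigma$; or small resolutions of nodal models coming from toric or abelian data. The Lagrangian $S^3$ can be produced either as a matching cycle in a Lefschetz-type fibration---by symplectic parallel transport of two vanishing cycles of nodal fibers along a path in the base until they coincide---or as a component of the fixed locus of an antiholomorphic involution. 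Essentiality of $[L]$ would then be detected by intersection against a concrete dual 3-cycle arising from the fibration, the group action, or a section/divisor, or by pairing against the imaginary part of the holomorphic volume form.

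\emph{Main obstacle.} The technical heart of the proof is securing $[L]\neq 0$ while maintaining $h^{1,2}(\hat X)=0$. Many standard constructions of Lagrangian spheres produce null-homologous examples (vanishing cycles of some collapse map), and most rigid CY threefolds are cohomologically too rigid to host interesting essential classes in $H_3$. Arranging that the parallel transport or the involution records a nontrivial integral homology class in a genuinely rigid setting is where the geometric input of the paper must be concentrated.
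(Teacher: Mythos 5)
There is a genuine gap: the part of your proposal that addresses what the theorem actually asserts --- the \emph{existence} of a rigid projective CY threefold carrying a homologically essential Lagrangian $S^3$ --- is a research plan, not a proof. You correctly identify the right neighborhood (Schoen-type fiber products of extremal rational elliptic surfaces, Lagrangian spheres from symplectic parallel transport) and you correctly diagnose the hard point (securing $[L]\neq 0$ while keeping $h^{1,2}=0$), but none of the three essential steps is carried out. (i) \emph{The specific example and its rigidity:} the paper takes $S\to\P^1$ to be the universal family over $X_1(6)$ (four fibers $I_6,I_3,I_2,I_1$) and forms $X=S\times_{\P^1}S$ after twisting one factor by an automorphism $\phi$ of $\P^1$ permuting three of the four critical values; rigidity of the small resolution $\hat X$ follows from $\chi_{top}(\hat X)=2n$ together with $\rho(\hat X)=n$, which requires the Mordell--Weil/Picard computation of Prop.~\ref{pic}. (ii) \emph{Existence of the Kähler form:} on a projective small resolution the naive product Kähler form degenerates along the exceptional curves, so it is not automatic that the parallel-transport construction produces a Lagrangian; the paper has to invoke the Smith--Thomas--Yau conifold-transition theorem and, crucially, build the correcting $4$-chain $\eta$ supported away from $\pi^{-1}(\gamma)$ (possible only because the fibers are $I_b\times I_{b'}$ with $b,b'>1$) so that the modified form is still a product near the path $\gamma$. (iii) \emph{Essentiality:} none of your proposed detection mechanisms (pairing with $\mathrm{Im}\,\Omega$, a section, a group action) is made to work; the paper instead constructs a second Lagrangian sphere $L'$ over a homotopically different path and computes $[L]\cdot[L']$ via the $\Gamma_1(6)$ monodromy matrices, finding it to be $\equiv 36/b'-36/b \pmod{36}$, a nonzero multiple of $6$. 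Without these three inputs there is no theorem.

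Your ``soft obstruction'' paragraph proves a statement adjacent to, but not identical with, the one quoted: it is the (already known, cf.\ the paper's citation of Seidel--Thomas-type results) implication ``rigid $+$ essential $\Rightarrow$ not a vanishing cycle,'' which the paper derives \emph{after} the theorem. Even there, the step ``$H^0(T_{\hat X})=0$, hence $\Aut(\hat X)$ is finite'' is not justified: vanishing of $H^0(T_{\hat X})$ only makes $\Aut(\hat X)$ discrete, and discrete automorphism groups of projective varieties can be infinite (and can act with infinite order on cohomology). The paper closes this by showing every automorphism preserves the fibration $\pi$ and deducing $\Aut(\hat X)\simeq(\Z/6\Z)^2$ from the Mordell--Weil group of the generic fiber, whence the monodromy image in $Sp(H_3(\hat X,\Z))\simeq SL_2(\Z)$ has order at most $6$.
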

\noindent
Rigidity implies that any degeneration of $\hat{X}$ is isotrivial.  We prove further that such a degeneration must have monodromy of order $\leq 6$ on $H_3(\hat{X})$.  In particular, this rules out nodal degenerations with vanishing cycle $L$; their monodromy would be a Dehn twist by $[L]\in H_3(\hat{X})$, which has infinite order.  It was known \cite{st} that if an essential Lagrangian sphere existed on a rigid CY3, then it could not be the vanishing cycle of a nodal degeneration.  Our construction produces four non-isomorphic rigid Calabi-Yau threefolds with Euler characteristics $66$, $72$, $80$, and $96$, each containing essential Lagrangian spheres.\\\\
{\bf Acknowledgments.}  The author is grateful to Mark McLean for suggesting the problem, and to the anonymous referees for valuable comments.  He has also benefitted from communications with Denis Auroux, Jonathan Evans, Ivan Smith, Richard Thomas, and Abigail Ward.
\section{The Construction}
\noindent
The counterexamples are among the Calabi-Yau threefolds considered by Schoen \cite{schoen}.  Consider the following pencil of cubics in $\P^2$:
$$(x+y)(y+z)(z+x) + t xyz =0.$$
Viewed as a family of curves over $\P^1$, the relatively minimal smooth model 
$$\nu:S\to \P^1$$
has four singular Kodaira fibers of types $I_6$, $I_3$, $I_2$, and $I_1$ over $t=\infty, 0, 1$, and $-8$, respectively.  This is one of six semistable elliptic families over $\P^1$ (all extremal) with the minimum number of singular fibers, as constructed by Beauville \cite{beauville}, and it is isomorphic to the universal family over the compactified modular curve $X_1(6)$.  See \cite{herfurtner} for a comprehensive reference on elliptic surfaces with four singular fibers.\\\\
Let $\phi$ be a non-trivial automorphism of $\P^1$ which permutes $\{\infty,0,1\}$, and note that $\phi(-8)\neq -8$.  We form the Cartesian product
$$
\xymatrix{
X \ar[r]\ar[rd]^\pi \ar[d] & S \ar[d]^{\phi\circ\nu} \\
S \ar[r]^{\nu}  & \P^1.
}
$$
The result is a singular projective threefold with $K_X=0$, fibered over $\P^1$ by abelian surfaces which are products of non-isogenous elliptic curves.  There are five critical values:  $\infty, 0, 1, -8,$ and $\phi(-8)$.  The total space $X$ has $n$ conifold singularities in the fibers over $\{\infty, 0,1\}$, located at the product of two nodes in the elliptic fibers.  For the different choices of $\phi$, we get $n=33,36,40,48$.  The value $n=36$ arises when $\phi$ acts as a 3-cycle; using either 3-cycle produces the same singular threefold.  The remaining values of $n$ arise from transpositions.\\\\
There exists a projective small resolution $\eps:\hat{X}\to X$ obtained by successive blow ups of the $n$ Weil divisors which are irreducible components of the singular fibers.  The resolution is crepant, so $\hat{X}$ is a smooth Calabi-Yau threefold.  We will use $\pi$ interchangeably with $\pi\circ \eps$ when no confusion can arise.
\begin{prop}\label{pic}
The Picard group of $\hat{X}$ has rank $n$.
\end{prop}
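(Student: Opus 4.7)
I would prove $\rho(\hat{X}) = n$ by producing $n$ independent divisor classes and then pinning down the upper bound via an Euler-characteristic computation combined with infinitesimal rigidity of $\hat{X}$.

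\emph{Lower bound.} To show $\rho(\hat{X}) \geq n$, I construct divisors in two batches. First, the two projections $p_1, p_2 : \hat{X} \to S$ (each composed with $\eps$) give pullback maps $p_i^* : \Pic(S) \to \Pic(\hat{X})$. By Shioda--Tate, $\rho(S) = 2 + 0 + (5+2+1) = 10$ since $S$ has Mordell--Weil rank zero (it is extremal). The only relation between the two pullback images is $p_1^*[F] = \pi^*[\text{pt}] = p_2^*[F']$ for the fiber classes $F, F'$, yielding $10 + 10 - 1 = 19$ independent classes. Second, at each $t_0 \in \{\infty, 0, 1\}$ the fiber of $X$ is a product $C \times D$ of an $I_{a_{t_0}}$ cycle and an $I_{b_{t_0}}$ cycle with components $C_i \times D_j$. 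The strict transforms $\widetilde{C_i \times D_j} \subset \hat{X}$ satisfy the Cartier-pullback relations $\sum_j [\widetilde{C_i \times D_j}] = p_1^*[C_i]$ and $\sum_i [\widetilde{C_i \times D_j}] = p_2^*[D_j]$, giving $a_{t_0} + b_{t_0} - 1$ independent relations modulo the pullback subgroup and leaving $(a_{t_0}-1)(b_{t_0}-1)$ new classes at $t_0$. Summing over the three overlap points: since $\sum_{t_0}(a_{t_0}+b_{t_0}-1) = 19$ (the multisets $\{a_{t_0}\}$ and $\{b_{t_0}\}$ are each a permutation of $\{6,3,2\}$) and $\sum_{t_0} a_{t_0}b_{t_0} = n$, I obtain $\rho(\hat{X}) \geq 19 + \sum(a_{t_0}-1)(b_{t_0}-1) = n$.

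\emph{Upper bound.} Stratified additivity of the Euler characteristic along $\pi : X \to \P^1$ gives $\chi(X) = n$: smooth abelian surface fibers contribute $0$, the two $I_1 \times E$ fibers over $-8$ and $\phi(-8)$ contribute $0$, and each $I_{a_{t_0}} \times I_{b_{t_0}}$ fiber over $\{\infty, 0, 1\}$ contributes $a_{t_0}b_{t_0}$. Since $\eps$ replaces each of the $n$ conifold points by a $\P^1$, I get $\chi(\hat{X}) = \chi(X) + n = 2n$. As $\hat{X}$ is Calabi--Yau, $\chi(\hat{X}) = 2(h^{1,1} - h^{2,1})$, forcing $h^{1,1} - h^{2,1} = n$.

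The main obstacle will be proving infinitesimal rigidity $h^{2,1}(\hat{X}) = 0$. The plan here is to trace deformations of $\hat{X}$ back to the construction data: the extremal modular surface $S$ is rigid (its configuration $(I_6, I_3, I_2, I_1)$ is uniquely realized as the universal family over $X_1(6)$), the base $\P^1$ has no moduli, and $\phi$ is a discrete parameter, so $\hat{X}$ a priori carries no continuous complex deformations. Making this rigorous---ruling out deformations that smooth some subset of the $n$ conifolds---will be the core difficulty, likely via a Leray-spectral-sequence analysis of $R^q \pi_* T_{\hat{X}}$ together with an obstruction argument. Once $h^{2,1} = 0$ is established, the Euler formula yields $h^{1,1} = n$, and Lefschetz $(1,1)$ then gives $\rho(\hat{X}) = h^{1,1}(\hat{X}) = n$.
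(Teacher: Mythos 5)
Your lower bound is sound in outline: the identity $19+\sum_{t_0}(a_{t_0}-1)(b_{t_0}-1)=\sum_{t_0}a_{t_0}b_{t_0}=n$ does check out for all four choices of $\phi$, and the classes you exhibit (the two pullback copies of $\Pic(S)$ glued along the fiber class, plus the strict transforms of the fiber components over $\{\infty,0,1\}$) are essentially the same divisors the paper organizes via the specialization sequence $0\to A\to \Pic(\hat{X})\to \Pic(X_\eta)\to 0$. You do assert rather than prove that the relations you list are the \emph{only} relations (independence is what a lower bound needs), but that is at roughly the level of detail of the paper's own "they satisfy 2 relations."

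The genuine gap is the upper bound. Your argument reduces $\rho(\hat{X})\le n$ to $h^{2,1}(\hat{X})=0$, which you explicitly defer as "the core difficulty" with only a sketched plan. This inverts the paper's logical order: the paper first computes $\rho(\hat{X})=n$ purely algebraically --- the kernel $A$ of specialization to the generic fiber is spanned by the $n$ fiber components modulo $2$ relations (rank $n-2$), and $\Pic(X_\eta)\simeq \Pic(S_\eta)^{\oplus 2}$ has rank $2$ because $S$ is extremal (Shioda--Tate gives $\rho(S)=10$ and the fiber components of $S$ account for rank $9$) --- and only \emph{then} deduces rigidity from the Euler-characteristic identity $\chi(\hat{X})=2n=2(h^{1,1}-h^{1,2})$. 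So the rigidity statement you want to feed into your upper bound is, in the paper, a \emph{consequence} of the proposition you are trying to prove; using it here without an independent proof makes the argument circular relative to the paper's development. Moreover, if you did have an independent proof of $h^{2,1}=0$, your entire lower-bound construction would be unnecessary, since $h^{1,1}=n$ plus Lefschetz $(1,1)$ already gives $\rho=n$. The fix is to replace your Euler-characteristic upper bound with the specialization exact sequence over the generic point of $\P^1$, which delivers both bounds at once without any Hodge-theoretic input.
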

\begin{proof}
The specialization of $\pi:\hat{X}\to \P^1$ to the generic point $\eta\in \P^1$ gives a split short exact sequence
$$0 \to A \to \Pic(\hat{X}) \to \Pic(X_\eta) \to 0,$$
where $A$ is the span of the $n$ divisor classes supported over $\{\infty,0,1\}$.  They satisfy 2 relations using the rational equivalence over $\P^1$.  The generic fibers of $\nu$ and $\phi\circ\nu$ are non-isogenous elliptic curves, so 
$$\Pic(X_\eta) \simeq \Pic(S_\eta)\oplus \Pic(S_\eta).$$
The specialization of $\nu:S\to \P^1$ to $\eta$ gives a split short exact sequence
$$0 \to B \to \Pic(S) \to \Pic(S_\eta)\to 0,$$
where $B$ is the span of the $12$ curves classes supported over $\{\infty,0,1,-8\}$.  They satisfy 3 relations using the rational equivalence over $\P^1$.  Since $\rho(S)=10$, we find that $\Pic(S_\eta)$ has rank 1.  In fact, the torsion Mordell-Weil group of $S_\eta$ is known \cite{mirandabasic}:
$$\Pic(S_\eta) \simeq \Z\oplus \Z/6\Z.$$
\end{proof}
\begin{prop}\label{rigid}
The threefold $\hat{X}$ is rigid in the sense that $H^1(T_{\hat{X}})=0$.
\end{prop}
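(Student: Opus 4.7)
The plan is to reduce the vanishing of $H^1(T_{\hat X})$ to an Euler characteristic identity. Since $\hat X$ is a Calabi-Yau threefold, contraction with the holomorphic $3$-form gives $T_{\hat X}\simeq \Omega^2_{\hat X}$, and hence $H^1(T_{\hat X})\simeq H^{2,1}(\hat X)$. The standard identity $\chi(\hat X) = 2(h^{1,1}-h^{2,1})$, combined with Proposition~\ref{pic} and the vanishing $h^{0,2}(\hat X)=0$ (standard for simply-connected Schoen-type threefolds, giving $h^{1,1}(\hat X) = \rho(\hat X) = n$), reduces the task to showing $\chi(\hat X) = 2n$.

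First I would compute $\chi(X)$ using the fiber-product structure $X = S\times_{\P^1} S$ with projections $\nu$ and $\phi\circ\nu$. Since the generic fiber is a product of two smooth elliptic curves and has Euler characteristic $0$, multiplicativity of topological Euler characteristic gives
$$\chi(X) = \sum_{s\in\P^1} \chi(F_{\nu,s})\cdot \chi(F_{\phi\circ\nu,s}).$$
Only $s\in\{\infty,0,1\}$ contribute: $\phi$ permutes $\{\infty, 0, 1\}$ and $\phi(-8)\neq -8$, so each $I_1$ fiber of one projection pairs with a smooth fiber of the other. Over each remaining $s$ the two fibers are of type $I_a$ and $I_b$ with $\{a,b\}\subset\{6,3,2\}$, of Euler characteristics $a$ and $b$ respectively, so the contribution is $ab$. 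Hence $\chi(X) = \sum ab$.

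Next, observe that each $I_a \times I_b$ fiber has exactly $ab$ threefold ordinary double points, namely the Cartesian products of the $a$ nodes of the first elliptic fiber with the $b$ nodes of the second. So the total conifold count matches: $n = \sum ab = \chi(X)$. The small resolution $\eps:\hat X\to X$ replaces each of the $n$ ODPs (a point) by a $\P^1$, adding $n$ to the Euler characteristic:
$$\chi(\hat X) = \chi(X) + n = 2n,$$
which is exactly what is needed.

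The main non-topological ingredient is the identification $h^{1,1}(\hat X) = n$, which relies on $h^{0,2}(\hat X)=0$; I would justify this by appealing to simple connectedness of the small resolution (inherited from the existence of sections of $\pi$) together with $K_{\hat X}=0$. The remainder is pure Euler characteristic bookkeeping, and one may sanity-check the answer numerically against the four cases $(n,\chi) = (33,66),(36,72),(40,80),(48,96)$ implicit in the Introduction.
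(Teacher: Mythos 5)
Your argument is correct and is essentially the paper's own proof: both reduce rigidity to the Calabi--Yau identity $\chi_{top}(\hat X)=2\left(h^{1,1}(\hat X)-h^{1,2}(\hat X)\right)$, identify $h^{1,1}(\hat X)=n$ via Proposition~\ref{pic}, and obtain $\chi_{top}(\hat X)=2n$ by summing the Euler characteristics of the singular fibers of $\pi$ and adding $n$ for the exceptional $\P^1$'s of the small resolution. You merely spell out the fiber-by-fiber bookkeeping (including the coincidence $n=\sum ab=\chi_{top}(X)$) and the $h^{0,2}=0$ input more explicitly than the paper does.
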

\begin{proof}
The cup product gives an isomorphism $H^1(T_{\hat{X}}) \simeq H^{1,2}(\hat{X})^\vee$ since $\hat{X}$ is Calabi-Yau.  As $X$ is fibered by tori, its topological Euler characteristic is determined by the singular fibers $I_b\times I_{b'}$.  This gives
\begin{align*}
\chi_{top}(X) &= n\\
\chi_{top}(\hat{X}) &= 2n,
\end{align*}
since the small resolution replaces each conifold point with a $\P^1$.  On the other hand, the Calabi-Yau property gives
$$\chi_{top}(\hat{X}) = 2\left(h^{1,1}(\hat{X}) - h^{1,2}(\hat{X})\right),$$
and $H^{1,1}(\hat{X})\simeq \Pic(\hat{X})$, so we are done by Prop. \ref{pic}.
\end{proof}
\begin{prop} \label{drag} $\hat{X}$ contains an embedded 3-sphere $L$ that is Lagrangian with respect to a K\"{a}hler form.\end{prop}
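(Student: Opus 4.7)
The plan is a matching-cycle construction using the two critical values $-8$ and $\phi(-8)$ of $\pi$, where the fiber is a nodal cubic times a smooth elliptic curve rather than a conifold.

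Choose a real embedded arc $\gamma\subset\P^1$ from $-8$ to $\phi(-8)$ avoiding $\{\infty,0,1\}$; over $\gamma$, $\hat X$ misses both the conifold singularities of $X$ and the exceptional curves of $\eps$. Since $\phi$ preserves $\{\infty,0,1\}$ setwise and $\phi(-8)\neq -8$, a small disk $U$ around $-8$ has $\phi^{-1}(U)$ disjoint from $\{\infty,0,1,-8\}$, so the second elliptic factor trivializes over $U$ as $(\phi\circ\nu)^{-1}(U)\simeq U\times E$ for a smooth elliptic curve $E$. This yields the local product $\hat X|_U\simeq\nu^{-1}(U)\times E$, and analogously near $\phi(-8)$.

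Let $\alpha_t\subset S_t$ be the $I_1$ vanishing cycle of $\nu$ at $-8$, symplectically parallel-transported along $\gamma$, so that $\alpha_{-8}$ is the node; similarly let $\beta_t\subset S_{\phi(t)}$ be the corresponding circle for $\phi\circ\nu$ at $\phi(-8)$. Define
$$L\;=\;\bigcup_{t\in\gamma}\alpha_t\times\beta_t\;\subset\;\hat X.$$
This is a torus bundle over $\gamma\simeq[0,1]$ whose first $S^1$-factor collapses at $t=-8$ and whose second collapses at $t=\phi(-8)$; hence $L$ is two solid tori glued with longitude-meridian swap, i.e.\ the genus-one Heegaard splitting of $S^3=S^1\ast S^1$. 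Smoothness at the endpoints is visible in the local product model, where $L$ looks like a symplectic Lefschetz thimble times a circle, a smoothly embedded $D^2\times S^1$.

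For the K\"{a}hler form, take $\omega=p_1^*\omega_S+p_2^*\omega_S+\omega''$, where $p_1,p_2:\hat X\to S$ are the compositions of $\eps$ with the two projections from $X$, $\omega_S$ is K\"{a}hler on $S$, and $\omega''$ is a K\"{a}hler correction supported near the exceptional locus of $\eps$ (hence vanishing on $L$, which lies in the locus where $\eps$ is an isomorphism). On each smooth fiber $F_t=S_t\times S_{\phi(t)}$, $\omega$ restricts to the product form $\omega_S|_{S_t}+\omega_S|_{S_{\phi(t)}}$, for which the product torus $\alpha_t\times\beta_t$ --- isotropic in each 2-dimensional factor --- is Lagrangian. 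As $\gamma$ is 1-real-dimensional there is no curvature obstruction, so $\omega|_L\equiv 0$ away from the endpoints, and the Lagrangian condition extends across them by the classical fact that a symplectic Lefschetz thimble is itself a smooth Lagrangian disk. The main obstacle is precisely this extension across the endpoints of $\gamma$, where $L$ is singular as a torus bundle but smooth as a 3-manifold; it is handled by the local product model from the second paragraph.
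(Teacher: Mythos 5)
Your construction of $L$ itself coincides with the paper's: a matching torus $\alpha_t\times\beta_t$ swept by symplectic parallel transport along $\gamma$ between the two fibers of type $I_1\times(\text{smooth})$, closed up by Lefschetz-thimble-times-circle caps into the genus-one Heegaard $S^3$. That part is correct. The gap is the assertion that one may take $\omega=p_1^*\omega_S+p_2^*\omega_S+\omega''$ with $\omega''$ a \emph{closed} correction supported near the exceptional locus of $\eps$ making the sum K\"ahler. This is exactly the hard point of the whole argument, and it is not a routine localization. The form $p_1^*\omega_S+p_2^*\omega_S$ is pulled back from the singular $X$, so it vanishes on each exceptional curve $C_i$; a usable correction must be a closed $2$-form supported near $\bigcup_i C_i$ with $\int_{C_i}\omega''>0$ for every $i$. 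You cannot obtain this by multiplying a K\"ahler form of $\hat X$ by a bump function (that destroys closedness), nor by writing the difference of two K\"ahler representatives as $dd^c\varphi$ and cutting off $\varphi$, because a genuine K\"ahler form of $\hat X$ and the pulled-back product form lie in different cohomology classes (one is positive on the $C_i$, the other integrates to zero there). The existence of a compactly supported closed form with the required positivity is a nontrivial cohomological condition on how the $C_i$ sit inside $\hat X$.

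The paper supplies this missing step via the Smith--Thomas--Yau conifold-transition theorem: it realizes $\hat X$ as a conifold transition of the smooth Schoen manifold $Y$ in $n$ Lagrangian spheres $L_i$ satisfying a homology relation with all coefficients nonzero, constructs an explicit $4$-chain $\eta$ bounding $\bigcup_i L_i$ supported over short paths near $\{\infty,0,1\}$ (hence disjoint from $\pi^{-1}(\gamma)$), and takes the correction to be a closed form $\sigma$ Poincar\'e dual to the resulting $4$-cycle, arranged to tame $J$ near each $C_i$; then $N\omega_Y+\sigma$ is K\"ahler for $N\gg 0$ and still agrees with a multiple of the product form over a neighborhood of $\pi^{-1}(\gamma)$, which is what makes the parallel transport preserve the product structure $E\times E'$ and hence makes your swept-out tori isotropic. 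The construction of $\eta$ uses that the fibers over $\{\infty,0,1\}$ are $I_b\times I_{b'}$ with $b,b'>1$, so that cylinders between adjacent vanishing loops exist; this is also where the projectivity of the small resolution enters. As written, your key assertion about $\omega''$ is unsupported, and it is precisely the place where the specific geometry of this example, rather than general matching-cycle formalism, is needed.
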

\begin{proof}  We adapt a construction from \cite{st}.  Let $\gamma:[0,1]\to \P^1$ be an embedded path missing $\{\infty,0,1\}$ with $\gamma(0)=-8$ and $\gamma(1) = \phi(-8)$.  Choose a K\"{a}hler form $\omega$ on $\hat{X}$ which agrees over a neighborhood of $\gamma$ with the sum of pullbacks of K\"{a}hler forms from each elliptic surface factor $S$ (this is explained below).  Using the horizontal distribution $\omega$-orthogonal to the $\pi$-vertical tangent spaces, there is a symplectic parallel transport along $\gamma$.    By our choice of $\omega$, the transport preserves the product structure $E\times E'$ on the fibers over $\gamma$.   Let $\ell_0\subset \pi^{-1}\left(\gamma(\frac{1}{2})\right)$ be a vanishing loop in $E$ for the flow toward 0, and $\ell_1\subset \pi^{-1}\left(\gamma(\frac{1}{2})\right)$ a vanishing loop in $E'$ for the flow toward 1.  The parallel transport along $\gamma$ sweeps out a Lagrangian:
$$L = \bigcup_{s\in [0,1]} (\ell_0)_{\gamma(s)} \times (\ell_1)_{\gamma(s)}\subset \hat{X},$$
diffeomorphic to $S^3$ fibered by 2-tori, with $S^1$ caps on either side.  It is smooth at the caps because it is locally the product of a Lefschetz thimble with $S^1$.
\end{proof}
\noindent
To show the existence of suitable K\"{a}hler forms on $\hat{X}$, we use the main idea of \cite{sty}.  Replacing $\phi$ in the construction of $X$ with a generic nearby automorphism $\phi'$ of $\P^1$, we obtain a deformation smoothing $Y$ of $X$.  Schematically,
$$\xymatrix{
 & \hat{X} \ar[d]_\eps \\
Y \ar@{~}[r] & X.
}$$
This $Y$ is commonly referred to as the {\it Schoen manifold}.  All the fibers of $Y\to \P^1$ have a smooth elliptic factor ($\nu$ and $\phi'\circ\nu$ share no critical values), so $\chi_{top}(Y)=0$.  By an argument similar to the proof of Prop. \ref{pic}, we find that $h^{1,1}(Y)=19$, so $h^{1,2}(Y)=19$ as well.  Since $Y$ is a smooth fibered product of projective surfaces, it admits a K\"{a}hler form $\omega_Y$ which is the sum of pullbacks of K\"{a}hler forms from each factor.  This form induces a closed form on $\hat{X}$ which is K\"{a}hler away from the exceptional curves.  In a slight abuse of notation, we will refer to this form as $\omega_Y$ too. \\\\
There are $n$ Lagrangian vanishing spheres $L_i\subset Y$ for the degeneration to $X$, constructed as in the proof of Prop. \ref{drag}, using the short paths from $\phi(t)$ to $\phi'(t)$ for $t\in \{\infty,0,1\}$.  The passage from $Y$ to $\hat{X}$ is called a {\it conifold transition}.  Topologically, it is one of the $2^n$ possible surgeries replacing each Lagrangian neighborhood $T^*L_i \simeq T^* S^3$ with 
$$\Tot(\O_{\P^1}(-1)\oplus \O_{\P^1}(-1)).$$
In our case, each such $\P^1$ is an exceptional curve $C_i$ of the projective small resolution $\eps:\hat{X}\to X$.  For a general conifold transition between 6-manifolds, we have:
\begin{thm} \cite{sty} \label{sty}
Fix a symplectic 6-manifold $Y$ with a collection of $n$ disjoint Lagrangian spheres $L_i$.  There exists a symplectic structure on one of the $2^n$ conifold transitions in the $L_i$ such that the resulting $C_i$ are symplectic if and only if there is a relation in homology:
$$\sum_{i=1}^n \lambda_i [L_i] = 0 \in H_3(Y,\Z),$$
with $\lambda_i\neq 0$ for all $i$.
\end{thm}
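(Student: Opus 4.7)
My plan is to prove both directions using the long exact sequences of the pairs $(Y,U)$ and $(\hat{X},U)$, where $U \simeq Y\setminus\bigsqcup L_i \simeq \hat{X}\setminus\bigsqcup C_i$ is the common open complement. The key geometric input is that the link of each node in the underlying singular $X$ is $S^2\times S^3$, where the $S^2$ factor bounds $C_i$ in $\hat{X}$ and the $S^3$ factor is isotopic to $L_i$ in $Y$.

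For the reverse direction, suppose $\omega$ is symplectic on $\hat{X}$ with $\lambda_i := \int_{C_i}\omega \neq 0$. I restrict to get $[\omega|_U]\in H^2(U)$ and compute the coboundary $\delta\colon H^2(U)\to H^3(Y,U)$. By Alexander--Lefschetz duality for the closed 6-manifold $Y$ with closed subspace $\bigsqcup L_i$, one has $H^3(Y,U)\cong \bigoplus_i H_3(L_i) = \bigoplus_i \Z$, and the map $H^3(Y,U)\to H^3(Y)\cong H_3(Y)$ is the inclusion-induced map $[L_i]\mapsto [L_i]$. Evaluated on each generator, $\delta([\omega|_U])$ is the period of $\omega|_U$ on a meridian 2-sphere linking $L_i$; this 2-sphere is homologous to $C_i$ in $\hat{X}$ (both bound the same relative 3-chain in the deleted neighborhood $\hat{N}_i \setminus C_i$), so the period is $\lambda_i$. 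Exactness then forces the coefficient vector $(\lambda_i)$, identified with $\sum \lambda_i[L_i]$, to vanish in $H_3(Y,\Z)$.

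For the forward direction, given $\sum \lambda_i [L_i]=0$ in $H_3(Y,\Z)$ with all $\lambda_i\neq 0$, I would reverse the argument: choose the conifold transition so that each $C_i$ is oriented consistently with $\mathrm{sgn}(\lambda_i)$, use exactness (together with the same duality identification) to produce a class $\alpha \in H^2(U)$ with $\delta(\alpha)=(|\lambda_i|)$, glue $\alpha$ with explicit local K\"ahler forms on each $\hat{N}_i = \Tot(\O(-1)\oplus\O(-1))$ of area $|\lambda_i|$ via a partition of unity on $\hat{X}$, and finally correct by a small exact form and a convex combination with a background K\"ahler form to upgrade the resulting closed 2-form to a symplectic one in which every $C_i$ is symplectic.

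The main obstacle will be the duality identification in the reverse direction. Because $T^*L_i$ is a trivial bundle, the absolute class $[L_i]$ maps to zero in $H_3(Y,U)$, so on the homology side one cannot directly read off $\sum \lambda_i[L_i]$ from the coefficient vector $(\lambda_i)$; the cohomological route through Alexander--Lefschetz duality is essential. Making signs consistent (so $\delta$ hits the positive generators precisely when $\lambda_i>0$) demands careful orientation tracking of the linking 2-sphere inside each $S^2\times S^3$, both as a boundary of a fiber of $T^*L_i$ and as a boundary of a disk in $\hat{N}_i$ meeting $C_i$ once. The forward gluing step, while technically standard, additionally relies on the fact that each $\hat{N}_i$ is Stein away from $C_i$, so that any closed 2-form with matching periods on the boundary $S^2\times S^3$ extends.
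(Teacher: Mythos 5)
Two preliminary remarks: the paper does not actually prove Theorem~\ref{sty} --- it is imported from \cite{sty} --- and the only piece of argument the paper supplies is a recollection of the mechanism of the ``if'' direction, because it needs the modification of the symplectic form to be localized near the chain realizing the relation. Measured against that, your ``only if'' direction is essentially correct and is not in the paper at all: the identification $H^3(Y,U)\cong\bigoplus_i H^3(T^*L_i,\partial)\cong\bigoplus_i\Z$ via $H^2(S^2\times S^3)$, the fact that the linking $2$-sphere is homologous to $C_i$ inside $\hat{N}_i$, and exactness at $H^3(Y,U)$ do give $\sum_i\lambda_i[L_i]=0$; your observation that the triviality of $T^*L_i$ forces the cohomological detour is also correct. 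Two caveats: the argument lands in $H_3(Y,\R)$, so you should add that the space of real relations among the $[L_i]$ is defined over $\Q$, whence a real relation with all coordinates nonzero yields an integral one; and your claim that $\hat{N}_i\smallsetminus C_i$ is Stein is false (it is the affine quadric cone minus its vertex, which violates Hartogs), though you do not need it --- the restriction $H^2(\hat{N}_i;\R)\to H^2(\partial\hat{N}_i;\R)$ is an isomorphism, so matching periods on $S^2\times S^3$ already controls the extension at the level of cohomology.

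The genuine gap is in the forward direction, at the only step that carries real content. Your class $\alpha\in H^2(U)$ with prescribed coboundary is exactly Poincar\'e dual to the paper's $4$-chain $\eta$ with $\partial\eta=\sum_i\lambda_i L_i$, so the skeleton agrees; but ``correct by a small exact form and a convex combination with a background K\"ahler form'' does not produce a symplectic form. First, in the generality of the theorem $Y$ is merely symplectic and the conifold transition is a priori just a smooth manifold, so there is no background K\"ahler form on $\hat{X}$ to combine with (even in the paper's projective setting, the available form $\omega_Y$ degenerates along the $C_i$, so it is not symplectic on $\hat{X}$). Second, and decisively, the space of symplectic forms is not convex, so a convex combination of a local model form near $C_i$ with a global form need not be nondegenerate in the gluing annulus. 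The idea that makes the construction close --- present in \cite{sty} and in the paper's sketch --- is to route nondegeneracy through \emph{taming}: fix an (almost) complex structure $J$ on $\hat{X}$ agreeing with the integrable one on each $\hat{N}_i$, arrange a representative $\sigma$ of your class to be $J$-tame on neighborhoods $U_i\supset C_i$ using cutoffs and $H^2(C_i,\R)\simeq\R$, and use that $J$-tameness is an open convex condition together with compactness of $\hat{X}\smallsetminus\cup_i U_i$ to conclude that $N\omega_Y+\sigma$ is $J$-tame, hence symplectic with each $C_i$ symplectic, for $N\gg0$. Without taming against a fixed $J$, your final step is an assertion rather than a proof.
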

\noindent
Our conifold transition satisfies the conditions of Thm. \ref{sty} because $\hat{X}$ is projective.  The proof of the `if' part of the theorem proceeds by finding a 4-chain $\eta$ on $Y$ with boundary equal to $\cup_i L_i$, realizing the relation above.  After passing through the conifold transition surgery, $\eta$ becomes a 4-cycle whose Poincar\'{e} dual is represented by a closed 2-form $\sigma$.  Using cutoff functions and the fact that $H^2(C_i,\R)\simeq \R$, one can arrange that $\sigma$ restricts to a $J$-tame form on a tubular neighborhood $U_i$ of each exceptional curve $C_i$, where $J$ is the integrable almost complex structure on $\hat{X}$.  Since $\omega_Y$ is $J$-tame away from the $C_i$ and $\hat{X}\smallsetminus \cup_i U_i$ is compact, there exists $N\gg 0$ such that $N\omega_Y + \sigma$ is $J$-tame everywhere.  Now,
$$H^{0,2}(\hat{X}) = H^{2,0}(\hat{X})=0$$
implies that every closed 2-form has type (1,1), so $J$-tame forms are K\"{a}hler.  The key point is that the modification of $\omega_Y$ is local around $\eta$.  We will produce a 4-chain $\eta$ on $Y$ disjoint from the fibers over the path $\gamma$ in Prop. \ref{drag}.  This means that the K\"{a}hler form on $\hat{X}$ can be chosen to agree with $\omega_Y$ in a neighborhood of $\pi^{-1}(\gamma)$.\\\\
Let $g:[0,1]\to \P^1$ be a path from $\phi(t)$ to $\phi'(t)$ for $t\in \{\infty,0,1\}$.  The fiber of $Y\to \P^1$ over $g(\frac{1}{2})$ is a product of elliptic curves:  $E\times E'$.  Now, $E$ (resp. $E'$) contains $b$ (resp. $b'$) disjoint homologous loops which get pinched to nodes in the fibers by the flow $s\to 0$ (resp. $s\to 1$).  If $\ell_0$ is such a loop in $E$, $\ell_1$ and $\ell_1'$ are adjacent such loops in $E'$, and $K\subset E'$ is a cylinder with boundary $\ell_1\cup \ell_1'$, then the parallel transport along $g$ sweeps out a $4$-chain:
$$\bigcup_{s\in [0,1]} (\ell_0)_{g(s)} \times (K)_{g(s)}\subset Y.$$
Its boundary is the union of two vanishing 3-spheres and an $S^1\times S^2$ given by $(\ell_0)_{g(1)}$ cross a $\P^1$ component of $(\phi'\circ\nu)^{-1}(g(1))$.  Repeating this construction for all other choices of adjacent loops in either factor, weighted appropriately, we obtain the desired $\eta$, which is supported over the short paths from $\phi(t)$ to $\phi'(t)$ for $t\in \{\infty,0,1\}$, disjoint from $\pi^{-1}(\gamma)$.

\begin{remark}
The construction above only works because $b,b'>1$ (in our case, they are 2, 3, or 6).  Otherwise the cylinder $K$ would not have a boundary.  This observation fits with the description of the projective small resolution $\eps:\hat{X}\to X$ as a blow up.  The irreducible components of a fiber $I_b\times I_{b'}$ are non-Cartier divisors in the threefold if and only if $b,b'>1$; see Lemma 3.1 of \cite{schoen}.
\end{remark}
\noindent
Lastly, we point out that the conifold transition lets us compute the Betti numbers of $X$ using Thm. 2.11 of \cite{sty}.  If $r$ is the rank of the span of $[L_i]$ in $H_3(Y,\Z)$, then 
\begin{align*}
b_2(\hat{X}) &= b_2(X)+n-r = b_2(Y)+n-r;\\
b_3(\hat{X}) &= b_3(X) - r = b_3(Y) - 2r;\\
b_4(\hat{X}) &= b_4(X) = b_4(Y)+n-r.
\end{align*}
Since $b_3(\hat{X})=2$ and $b_3(Y)=40$, we deduce that $r=19$.

\section{Elliptic Modular Surfaces}
\noindent
The classical modular curves $X_1(N)$ are constructed by compactifying quotients of the upper half plane $\H$ by the congruence subgroups $\Gamma_1(N)\subset SL_2(\Z)$:
$$\Gamma_1(N):= \left\{ \begin{pmatrix}a & b \\ c & d \end{pmatrix}  \in SL_2(\Z) : a\equiv d\equiv 1\,(6), c\equiv 0\,(6)   \right\};$$
\begin{align*}
\H^* &:= \H \cup \P^1(\Q);\\
X_1(N) &:= \H^*/\Gamma_1(N).
\end{align*}
The action of $\Gamma_1(N)$ on $\P^1(\Q)$ has finitely many orbits, which become cusps in the modular curve.  The stabilizer of a point in $\P^1(\Q)$ is a parabolic subgroup of $\Gamma_1(N)$ generated by a conjugate of
$$\begin{pmatrix} 1 & b \\ 0 & 1 \end{pmatrix},$$
where $b\in \mathbb N$ is called the width of the corresponding cusp.  There is a universal family over $\H/\Gamma_1(N)$ whose fiber at $\tau$ is the elliptic curve $\C/(\Z+\Z\tau)$.  This family admits a compactification over $X_1(N)$ by adding a fiber of Kodaira type $I_b$ over each cusp.  For $1\leq N\leq 10$ and $N=12$, the curve $X_1(N)$ has genus 0, and the Hauptmodul $j_N:X_1(N) \to \P^1$ is an isomorphism defined over $\Q$.  For our application, we specialize to the case $N=6$ where the elliptic modular surface is isomorphic to our example $\nu:S \to \P^1$ by \cite{beauville}.\\\\
Toward our ultimate goal of describing the homology of the Calabi-Yau threefold $\hat{X}$, we record the monodromy representation associated to $\nu:S\to \P^1$.
\begin{prop}\label{mono}
For a chosen base point $*\in \P^1\smallsetminus \{-8,\infty,1,0\}$, and a star-shaped, cyclically ordered collection of paths from $*$ to $-8$, $\infty$, $1$, and $0$, the monodromy
$$\pi_1(\P^1\smallsetminus\{-8,\infty,1,0\},*) \to SL(H_1(\nu^{-1}(*),\Z)) \simeq SL_2(\Z)$$
maps the corresponding positively oriented loops $\gamma_{-8}$, $\gamma_\infty$, $\gamma_1$, $\gamma_0$ respectively to
$$M_1=\begin{pmatrix} 1 & 1 \\ 0 & 1 \end{pmatrix},\, M_6= \begin{pmatrix} 1 & 0 \\ -6 & 1 \end{pmatrix},\, M_2= \begin{pmatrix} -5 & 2 \\ -18 & 7 \end{pmatrix},\, M_3= \begin{pmatrix} -5 & 3 \\ -12 & 7 \end{pmatrix}\in \Gamma_1(6).$$
\end{prop}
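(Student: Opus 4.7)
The plan is to exploit the isomorphism (cited from \cite{beauville}) between $\nu:S \to \P^1$ and the universal elliptic surface over $X_1(6)$. Under this identification, suitable choices of base point $*$ and basis for $H_1(\nu^{-1}(*),\Z)$ realize the monodromy representation as the natural inclusion $\Gamma_1(6) \hookrightarrow SL_2(\Z)$, and the four critical values of $\nu$ correspond to the four cusps of $X_1(6)$, with each cusp's width equal to the multiplicity $b$ of its $I_b$ fiber (by Picard--Lefschetz).

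First I would enumerate the cusps of $\Gamma_1(6)$ acting on $\P^1(\Q)$: these have representatives $\infty,\,0,\,1/3,\,1/2$, whose widths I would compute as the smallest $h>0$ for which $\alpha T^h \alpha^{-1} \in \Gamma_1(6)$, where $\alpha \in SL_2(\Z)$ carries $\infty$ to the cusp. This yields widths $1,\,6,\,2,\,3$ respectively. Matching widths against the fiber types in the statement of the elliptic fibration, the critical values $-8,\,\infty,\,1,\,0$ of $\nu$ are identified with the cusps $\infty,\,0,\,1/3,\,1/2$.

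Next I would compute the generator of each parabolic stabilizer by direct conjugation. For the cusp $\infty$, the generator is $T = M_1$; for $0$, it is $S T^6 S^{-1} = M_6$ with $S = \begin{pmatrix} 0 & -1 \\ 1 & 0 \end{pmatrix}$; and for $1/3$ and $1/2$, analogous conjugations by $\begin{pmatrix} 1 & 0 \\ 3 & 1 \end{pmatrix}$ and $\begin{pmatrix} 1 & 0 \\ 2 & 1 \end{pmatrix}$ yield $M_2$ and $M_3$ respectively. These choices of conjugating element implicitly fix the star-shaped paths joining $*$ to each critical value.

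Finally, to confirm that the four matrices can be realized simultaneously by a single cyclically ordered collection of paths, I would verify by direct matrix multiplication the relation
\[
M_1 M_6 M_2 M_3 = I,
\]
which is precisely the single defining relation of $\pi_1(\P^1 \smallsetminus \{-8,\infty,1,0\},*)$ for the cyclic ordering in the statement. I expect this consistency check to be the main obstacle: the complex geometry a priori only determines the monodromy up to simultaneous conjugation by an element of $\Gamma_1(6)$, so one must verify that these \emph{specific} matrices—not merely their conjugacy classes—can be achieved concurrently, in the prescribed cyclic order, as the images of positively oriented loops.
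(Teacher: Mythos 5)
Your overall strategy coincides with the paper's: identify $\nu$ with the universal family over $X_1(6)$ so that the monodromy representation becomes the inclusion $\Gamma_1(6)\hookrightarrow SL_2(\Z)$, locate the four cusps at $\infty,0,\frac13,\frac12$, match them to the critical values $-8,\infty,1,0$ by comparing the cusp widths $1,6,2,3$ with the Kodaira types, and produce the parabolic stabilizer generators by conjugating powers of $T$. All of these computations are correct and reproduce the matrices $M_1,M_6,M_2,M_3$, and the relation $M_1M_6M_2M_3=I$ does hold.

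The gap is in your last step. You correctly isolate the crux --- the geometry a priori pins down each local monodromy only up to conjugacy, so one must show that these \emph{specific} matrices are achieved simultaneously by a single star-shaped system of paths in the stated cyclic order --- but the verification you offer, $M_1M_6M_2M_3=I$, is only a necessary condition. For a fixed cyclic ordering of the punctures, the tuples realized by star-shaped path systems form a single orbit under the pure mapping class group of the $4$-punctured sphere together with overall conjugation; there are in general tuples with the correct conjugacy classes and product $I$ that lie outside this orbit, and the product relation can neither distinguish them nor certify that your chosen cyclic order (rather than some other ordering also satisfying a product relation) is the geometric one. The paper closes exactly this gap by exhibiting an explicit contractible fundamental domain $\mathcal G=\bigcup_{i}g_i^{-1}\mathcal F$ for $\Gamma_1(6)$ whose boundary contains the four cusp representatives $i\infty,0,\frac13,\frac12$: the universal family is topologically trivial over $\mathcal G$ (fixing the basis of $H_1$ of the fiber), non-intersecting paths from an interior base point to these boundary cusps descend to a genuine star-shaped system whose cyclic order is read off from $\partial\mathcal G$, and the monodromy of each loop is then exactly the stabilizer generator of the corresponding representative, with no residual conjugation. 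Some argument of this kind --- a fundamental domain, or a direct check that your conjugating elements $S$, $\begin{pmatrix}1&0\\3&1\end{pmatrix}$, $\begin{pmatrix}1&0\\2&1\end{pmatrix}$ come from a compatible, embedded family of lifted paths --- is needed to finish the proof.
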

\begin{proof}
The action of $\Gamma_1(6)$ on $\H$ has a fundamental domain $\mathcal G$, which can be built by translating the strip fundamental domain $\mathcal F$ for $PSL_2(\Z)$ by 12 representatives $g_i$ for the left cosets of $\Gamma_1(6)\subset PSL_2(\Z)$:
$$\mathcal G  = \bigcup_{i=1}^{12} g_i^{-1} \mathcal F.$$
The coset space is identified with the $PSL_2(\Z)$ orbit of $(1,0)$ in $(\Z/6\Z)^2/\pm 1$.  If we choose the coset representatives
$$\begin{pmatrix} 1 & 0 \\ 0 & 1 \end{pmatrix}, \begin{pmatrix} 0 & 1 \\ -1 & 0 \end{pmatrix}, \begin{pmatrix} -1 & 1 \\ -1 & 0 \end{pmatrix}, \begin{pmatrix} -2 & 1 \\ -1 & 0 \end{pmatrix}, \begin{pmatrix} -3 & 1 \\ -1 & 0 \end{pmatrix}, \begin{pmatrix} -4 & 1 \\ -1 & 0 \end{pmatrix},$$
$$ \begin{pmatrix} -5 & 1 \\ -1 & 0 \end{pmatrix}, \begin{pmatrix} -5 & 3 \\ -2 & 1 \end{pmatrix}, \begin{pmatrix} -7 & 4 \\ -2 & 1 \end{pmatrix}, \begin{pmatrix} -9 & 5 \\ -2 & 1 \end{pmatrix}, \begin{pmatrix} -11 & 4 \\ -3 & 1 \end{pmatrix}, \begin{pmatrix} -14 & 5 \\ -3 & 1 \end{pmatrix},$$
then $\mathcal G$ is contractible and has cusps at $\tau = i\infty, 0, \frac{1}{3}, \frac{1}{2}$.  The universal family of elliptic curves over $\mathcal G$ is topologically trivial, so for any base point $\tau_0$ in the interior of $\mathcal G$, we can take the standard basis for $H_1(\nu^{-1}(\tau_0),\Z)\simeq \Z^2$.  Choose non-intersecting paths in $\mathcal G$ from $\tau_0$ limiting to each of the four cusps.  The quotient $\mathcal G/\Gamma_1(6)$ is homeomorphic to a sphere with four punctures, and $\pi_1(\mathcal G/\Gamma_1(6),\tau_0)$ is generated by four oriented loops around the punctures, conjugated by the paths above, with the relation that their cyclically ordered product is 1.  The $\Gamma_1(6)$-stabilizer of each cusp of $\mathcal G$ is generated by the respective matrix in the statement of the proposition, and this gives the monodromy action on $\Z^2$.  The isomorphism $j_6: X_1(6)\to \P^1$ must identify the cusps with the critical values of the pencil $\nu$ matching cusp widths with Kodaira types, so $j_6(i\infty)=-8$, $j_6(0)=\infty$, $j_6(\frac{1}{3})=1$, and $j_6(\frac{1}{2})=0$.  We set $*=j(\tau_0)$.
\end{proof}

\section{Homology class of $L$}\label{mv}
\noindent
To show that the Lagrangian sphere $L\subset \hat{X}$ is essential (for all choices of $\phi$ and $\gamma$), we construct a second Lagrangian sphere that intersects it non-trivially.  Recall that $\gamma$ is an embedded path from $-8$ to $\phi(-8)$ missing the other critical values, and let $U\subset \P^1$ be a small neighborhood of $\gamma$.  Let $V\subset \P^1$ be an open set containing another critical value $c$ with fiber $I_b\times I_{b'}$ such that $b\neq b'$, overlapping with $U$ as pictured below in the case where $b=2$ and $b'=6$.  Such a value exists because $\phi$ permutes $\{\infty,0,1\}$ non-trivially.  Let $\gamma'$ be another path from $-8$ to $\phi(-8)$ embedded in $U\cup V \smallsetminus \{c\}$, not homotopic to $\gamma$ in that complement, and meeting $\gamma$ only at $-8$ and $\phi(-8)$.  The construction of Prop. \ref{drag} applied to $\gamma'$ gives a second Lagrangian sphere $L'$.  Note that as a set,
$$L\cap L' \subset \pi^{-1}(-8) \cup \pi^{-1}(\phi(-8)).$$
\begin{prop}\label{mult}
The intersection product $[L]\cdot [L']$ is a nonzero multiple of $6$.
\end{prop}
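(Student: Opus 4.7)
The plan is to compute $[L]\cdot[L']$ as a sum of signed intersections in the two critical fibers $\pi^{-1}(-8)$ and $\pi^{-1}(\phi(-8))$, using the product structure on those fibers and the $\Gamma_1(6)$ constraint on monodromy from Proposition~\ref{mono}. First I would verify that $L\cap L'$ is localized in these fibers: away from the endpoints of $\gamma, \gamma'$, symplectic parallel transport preserves the product $E\times E'$, and the Lagrangian tori swept out by $L$ and $L'$ lie over disjoint points of the base. Over $\pi^{-1}(-8) = I_1 \times E'_{-8}$ with $E'_{-8} = \nu^{-1}(\phi^{-1}(-8))$, the two Lagrangians restrict to $\{\text{node}\}\times \tilde\ell_1$ and $\{\text{node}\}\times \tilde\ell_1'$, where $\tilde\ell_1,\tilde\ell_1'\subset E'_{-8}$ are the parallel transports of $\ell_1$ along $\gamma$ and $\gamma'$.

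Near such a cap point only one elliptic factor degenerates, so $X$ is smooth and $\hat{X}$ is locally $\C^3$. There $L$ is a Lefschetz thimble in the first factor (swept out by $\ell_0$ collapsing to the node over $-8$) crossed with the circle $\tilde\ell_1$, and $L'$ has the analogous description via $\gamma'$. For generic $\gamma'$ the two thimbles meet transversally at the node, so the local intersection sign at a cap point splits as a product of a thimble-intersection sign in $S$ and a 1-cycle intersection sign in $E'_{-8}$. Summing gives a contribution $\epsilon_1 \langle [\tilde\ell_1], [\tilde\ell_1']\rangle_{E'_{-8}}$ to $[L]\cdot[L']$, together with an analogous contribution $\epsilon_2 \langle [\tilde\ell_0],[\tilde\ell_0']\rangle_{E_{\phi(-8)}}$ from the other endpoint.

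To evaluate the 1-cycle pairings, observe that the loop $\gamma_1'\cdot \gamma_1^{-1}$ encircles $c$, so by Proposition~\ref{mono} the second-factor monodromy along it is $\tau_v^{b'}$, where $v$ is the $\nu$-vanishing cycle at $\phi^{-1}(c)$ and $b'\in\{2,3,6\}$ is the Kodaira multiplicity. Picard--Lefschetz gives
\[
\langle [\tilde\ell_1], [\tilde\ell_1']\rangle = -b'\,\langle [\tilde\ell_1], v\rangle^2.
\]
For the divisibility, both $[\tilde\ell_1]$ and $v$ lie in $\Gamma_1(6)$-orbits of $e_1$ and a vector from $\{e_2,(1,2),(1,3)\}$ respectively. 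Since every $M\in\Gamma_1(6)$ reduces modulo $6$ to an upper triangular unipotent matrix, a short computation shows $\langle [\tilde\ell_1], v\rangle \equiv 1, 2,$ or $3 \pmod 6$ according as $\phi^{-1}(c) = \infty, 0,$ or $1$; combined with the corresponding $b' = 6, 3, 2$, the product $b'\langle [\tilde\ell_1], v\rangle^2$ is always a multiple of $6$. The symmetric calculation at $\phi(-8)$ produces a second multiple of $6$.

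Finally, each contribution is a \emph{nonzero} multiple of $6$ since the mod-6 residue of the pairing is never zero. To rule out cancellation between the two endpoints, I would fix consistent Lagrangian orientations on $L$ and $L'$ and argue from the symmetry of their construction (identical procedures at both ends, with the roles of the two factors interchanged) that the thimble-intersection signs $\epsilon_1, \epsilon_2$ coincide, so the two contributions add with the same sign. This sign-consistency step is the main obstacle: the divisibility by $6$ is cleanly forced by the $\Gamma_1(6)$ structure, but non-cancellation requires careful tracking of how the global $S^3$-orientations of $L$ and $L'$ induce compatible orientations on the Lefschetz thimbles at both caps.
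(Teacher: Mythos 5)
Your overall strategy is the same as the paper's: localize $L\cap L'$ in the two cap fibers, reduce each contribution to an intersection pairing of transported vanishing cycles on the smooth elliptic factor, and use the fact that the monodromy lies in $\Gamma_1(6)$ to control that pairing arithmetically. Your mod-$6$ computation of $\langle \tilde\ell_1, v\rangle$ and the resulting identification of $b'\langle\tilde\ell_1,v\rangle^2$ with $36/b'$ is exactly the paper's observation that the lower-left entries of $M_2,M_3,M_6$ equal $-36/b$ and that conjugation by $\Gamma_1(6)$ preserves the lower-left entry modulo $36$.

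The gap is in the non-cancellation step, and the fix you propose would not close it. The comparison loop $\gamma'\gamma^{-1}$ is traversed with opposite orientations when based at the two endpoints: at $\phi(-8)$ you compare the transport of $\ell_0$ along $\gamma'$ with its transport along $\gamma$, while at $-8$ you compare transports of $\ell_1$ along $(\gamma')^{-1}$ and $\gamma^{-1}$, and these two loops wind around $c$ with opposite signs. Consequently one contribution has the form $\langle u,\tau_w^{b}u\rangle$ and the other $\langle u,\tau_v^{-b'}u\rangle$, which carry \emph{opposite} signs ($\pm b\langle u,w\rangle^2$ versus $\mp b'\langle u,v\rangle^2$); this is why the paper's formula contains $gM_bg^{-1}$ in one summand but $h^{-1}M_{b'}^{-1}h$ in the other. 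Even granting your symmetry claim $\epsilon_1=\epsilon_2$, the total is a \emph{difference}, congruent to $\pm\left(36/b-36/b'\right)$ modulo $36$, not a sum of two negative terms. This is precisely why the paper insists on choosing the critical value $c$ with $b\neq b'$ --- a hypothesis your argument never invokes, which is the telltale sign of the error: if both contributions really pointed the same way, the conclusion would hold for $b=b'$ as well, whereas in that case the two terms cancel modulo $36$ and nothing follows. To finish correctly you must track the pairings modulo $36$ (not merely modulo $6$) and use that $36/b\not\equiv 36/b'\pmod{36}$ for distinct $b,b'\in\{2,3,6\}$, which yields a nonzero multiple of $6$.
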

\begin{proof}
Choose a basis for homology of the smooth fiber $H_1(E\times E',\Z) \simeq \Z^2\oplus \Z^2$ over an interior point of $\gamma$ such that the vanishing loops $\ell_0\subset E$ and $\ell_1\subset E'$ each have class $\begin{bmatrix}1\\0  \end{bmatrix}$ in their respective copy of $\Z^2$.  The intersection $L\cap \pi^{-1}(\phi(-8))$ is the parallel transported loop $\ell_0$ in the smooth elliptic factor, cross the node in the $I_1$ factor.  The intersection $L'\cap \pi^{-1}(\phi(-8))$ is a different loop in the smooth elliptic factor, cross the node in the $I_1$ factor.  Hence, the contribution to $[L]\cdot [L']$ from points in $\pi^{-1}(\phi(-8))$ can be computed on the smooth elliptic factor $E$.  Likewise, the contribution to $[L]\cdot [L']$ from $\pi^{-1}(-8)$ can be computed on the smooth elliptic factor $E'$.  In terms of the (conjugated) monodromy matrices from Prop. \ref{mono},
$$[L]\cdot [L'] = \begin{bmatrix} 1\\ 0 \end{bmatrix} \cdot gM_bg^{-1}\begin{bmatrix} 1\\ 0 \end{bmatrix} + \begin{bmatrix} 1\\ 0 \end{bmatrix} \cdot h^{-1}M^{-1}_{b'}h\begin{bmatrix} 1\\ 0 \end{bmatrix}.$$
Here $g,h\in \Gamma_1(6)$ come from the ambiguity in choosing the paths $\gamma$ and $\gamma'$ in the punctured sphere.  Each summand above is equal to the lower left entry in a monodromy matrix.  Observe that the matrices $M_b$ ($b=2,3,6$) in Prop. \ref{mono} have lower left entries $-36/b$.  Conjugating $M_b$ by an element of $\Gamma_1(6)$ does not change the lower left entry modulo 36.  Therefore, the difference of any two such entries for $b\neq b'$ is a nonzero multiple of 6.
\end{proof}
$$\includegraphics[width=102mm]{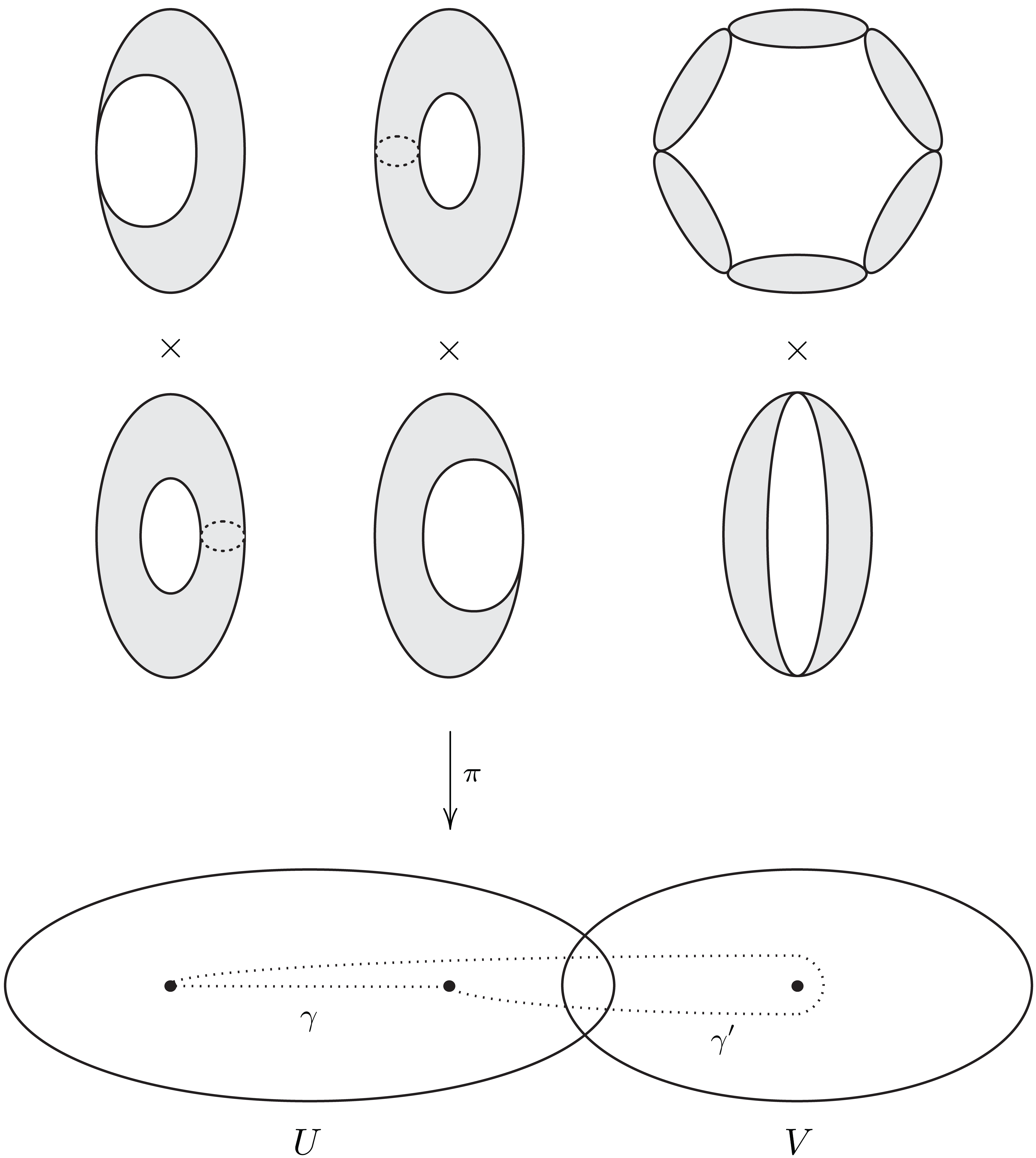}$$

\section{Degenerations of $\hat{X}$}
\noindent
Suppose that $\hat{X}$ admits a K\"{a}hler degeneration.  That is, $\hat{X}$ is isomorphic to a fiber of a proper holomorphic family over a complex disk
$$f:\mathcal X \to \Delta,$$
and $f^{-1}(0)$ is singular.  By Prop. \ref{rigid}, $\hat{X}$ has no moduli so $f$ is holomorphically locally trivial away from $0\in \Delta$, by the Fischer-Grauert theorem.  The fiber bundle 
$$f^{-1}(\Delta^*)  \to \Delta^*$$
has monodromy valued in $Aut(\hat{X})$.  In other words,
$$\Z\simeq \pi_1(\Delta^*) \to Aut(\hat{X}) \to Sp( H_3(\hat{X},\Z ))\simeq SL_2(\Z).$$
To control potential isotrivial degenerations, we prove that $Aut(\hat{X})$ is finite.
\begin{prop}
Every automorphism preserves the fibration $\pi:\hat{X}\to \P^1$.
\end{prop}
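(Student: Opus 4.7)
The strategy is to characterize $\pi$ by a cohomological property preserved by $\Aut(\hat{X})$. Set $\L := \pi^* \O_{\P^1}(1) \in \Pic(\hat{X})$. Since $\pi$ has two-dimensional fibers, $\L^2 = 0 \in H^4(\hat{X},\Z)$, so $\L$ lies on a boundary ray of the nef cone; moreover $\L$ is primitive. If the ray $\R_{\geq 0}\cdot \L$ is the unique primitive nef ray in $\Pic(\hat{X})\otimes \R$ with a square-zero generator, then for any $\psi \in \Aut(\hat{X})$ the class $\psi^*\L$ is another such primitive generator, hence equals $\L$. This forces $\psi$ to preserve the pencil $|\L|$, and so $\pi\circ\psi = \sigma \circ \pi$ for some $\sigma \in \Aut(\P^1)$.

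To establish this uniqueness, let $D$ be any primitive nef class with $D^2 = 0$. The associated Iitaka (or nef-abundance) fibration produces a morphism $g_D:\hat{X}\to B$ onto a smooth curve whose generic fiber $F$ is a minimal two-dimensional Calabi--Yau, hence either a K3 or an abelian surface. The K3 case is immediately ruled out by the identity $c_2(\hat{X}) \cdot D = \chi_{top}(F)$: the class $c_2(\hat{X})$ is $\Aut$-invariant and pairs to $0$ with $\L$ (abelian fibers), whereas a K3 fiber would force it to equal $24$. For the abelian case, adjunction together with $K_{\hat{X}} = K_F = 0$ forces $N_{F/\hat{X}}$ to be trivial, so each smooth abelian surface $F \subset \hat{X}$ moves in a one-parameter family foliating $\hat{X}$. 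Using that the generic $\pi$-fiber $E \times E'$ has non-isogenous factors, and hence Picard rank exactly $2$ generated by the two factors, any second such abelian fibration would have to restrict on each $\pi$-fiber to a fibration whose fibers are proportional to the $E$- or $E'$-class; this pins it down to factor through one of the surface projections $p_1,p_2:\hat{X}\to S$ coming from the Cartesian product construction, and thence through $\pi$ itself.

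The principal obstacle is this last step: classifying the smooth abelian subsurfaces of $\hat{X}$ --- that is, controlling the relevant component of the Hilbert scheme and ruling out exotic abelian fibrations not arising from $\nu$ and $\phi\circ\nu$. Once the uniqueness of the ray $\R_{\geq 0}\cdot\L$ is established, the conclusion is automatic: primitivity of both $\L$ and $\psi^*\L$, together with $(\psi^{-1})^*\L$ inverting any positive scalar, gives $\psi^*\L = \L$, so $\pi$ is preserved.
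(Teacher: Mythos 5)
Your overall reduction is reasonable in outline---if every $\psi\in\Aut(\hat{X})$ fixes the ray of $\L=\pi^*\O_{\P^1}(1)$ then it permutes the pencil $|\L|$ and hence preserves $\pi$---but the argument has a genuine gap precisely where you flag it, and the route you have chosen makes that gap hard to close. Three concrete problems. First, for an arbitrary primitive nef class $D$ with $D^2=0$ on a Calabi--Yau threefold, the existence of the fibration $g_D$ is the semiampleness/abundance problem, which is not automatic; you get it for free only for $D=\psi^*\L$, the pullback of a semiample class by an automorphism. Second, the identity $c_2(\hat{X})\cdot D=\chi_{top}(F)$ rules out K3 fibers only once you know $c_2(\hat{X})\cdot D=0$, which again you know only for $D$ in the $\Aut$-orbit of $\L$, not for every square-zero nef class; so what your argument can plausibly deliver is invariance of the orbit of the ray, not the much stronger uniqueness statement about the boundary of a nef cone of rank $n\geq 33$, which you assert but do not prove. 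Third, and decisively, the step you yourself call the principal obstacle---showing that any abelian-surface fibration of $\hat{X}$ over a curve coincides with $\pi$---is essentially the entire content of the proposition; observing that its restriction to a general $\pi$-fiber $E\times E'$ must be proportional to the $E$- or $E'$-class does not by itself force the fibration to factor through one of the projections to $S$, and no argument is given. As written, the proposal is an unproved reduction to a harder-looking statement.

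For comparison, the paper closes exactly this gap with a short direct argument that bypasses the nef cone entirely: let $A$ be a general fiber of $\pi$ and suppose $\varphi(A)$ is not a fiber, so that $\pi|_{\varphi(A)}$ surjects onto $\P^1$. Composing with a projection $\hat{X}\to S$, the image of the abelian surface $\varphi(A)$ cannot be all of $S$ (a complex torus surjects only onto projective spaces and complex tori), so it is a curve $C$ dominating $\P^1$, and $\varphi(A)\to C$ is an elliptic fibration whose fibers sit inside fibers of the other elliptic surface projection. Since $C$ dominates $\P^1$ it meets the locus where those fibers are singular Kodaira fibers, so $\varphi(A)\to C$ acquires singular fibers and $\chi_{top}(\varphi(A))>0$, contradicting $\chi_{top}(A)=0$. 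If you want to keep your cohomological framing, you should at minimum restrict attention to $D=\psi^*\L$ and then supply an argument of this kind to identify the resulting abelian fibration with $\pi$.
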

\begin{proof}
Let $\varphi:\hat{X}\to \hat{X}$ be an automorphism, and let $A$ be a general fiber of $\pi$.  If $\varphi$ does not preserve the fibration, then $\varphi(A)$ surjects onto $\P^1$.  The image of $\varphi(A)$ in $S$ cannot be all of $S$ because complex tori only surject onto projective spaces and complex tori \cite{demailly}.  Thus, $\phi(A)$ maps onto a curve $C$ in $S$, and the generic fiber is an elliptic curve.  There must be singular fibers because $C$ surjects to $\P^1$, which contradicts the fact that $\chi_{top}(A)=0$.
\end{proof}
\begin{prop}
The group $Aut(\hat{X})$ is isomorphic to $(\Z/6\Z)^2$.
\end{prop}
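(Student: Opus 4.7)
The plan is to combine the preceding proposition with a fiberwise analysis. Any $\varphi \in \Aut(\hat X)$ descends to a base automorphism $\bar\varphi \in \Aut(\P^1)$ preserving the $5$-element discriminant $D = \{\infty, 0, 1, -8, \phi(-8)\}$. I would decompose the argument into (a) the image of the restriction map $\Aut(\hat X) \to \Aut(\P^1)$ is trivial, and (b) the kernel is $(\Z/6\Z)^2$.

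For (a), $\bar\varphi$ preserves the dimension of the singular locus of each fiber, which distinguishes $\{\infty, 0, 1\}$ (conifold points, singular locus $0$-dimensional) from $\{-8, \phi(-8)\}$ (singular locus $1$-dimensional). Within $\{\infty, 0, 1\}$, $\bar\varphi$ further preserves the unordered Kodaira pair $\{b, b'\}$ of the fiber type $I_b \times I_{b'}$. For $\phi$ a $3$-cycle the three pairs are $\{6, 2\}, \{3, 6\}, \{2, 3\}$ — all distinct — forcing $\bar\varphi$ to fix $\{\infty, 0, 1\}$ pointwise and hence $\bar\varphi = \mathrm{id}$. For $\phi$ a transposition the two swapped points of $\{\infty, 0, 1\}$ share their unordered Kodaira pair, so a priori $\bar\varphi$ could equal $\phi$ itself; in that case $\phi$ lifts to an involution $\tau$ of the singular threefold $X$ (swapping the two fibered factors), and a local computation at a conifold — the involution $\xi_1 \leftrightarrow \eta_1$ on $\{\xi_1\eta_1 = \xi_2\eta_2\} \subset \C^4$ does not preserve the ideal $(\xi_1, \xi_2)$ defining the chosen small resolution — would show that $\tau$ sends $\hat X$ to a flopped small resolution, so $\tau \notin \Aut(\hat X)$.

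For (b), $\varphi$ now acts fiberwise. On the generic fiber $A_\eta = E_\eta \times E'_\eta$, non-isogeny of $E_\eta$ and $E'_\eta$ (already used in the proof of Prop.~\ref{pic}) forces $\varphi|_{A_\eta} \in (E_\eta \times E'_\eta) \rtimes (\Aut(E_\eta) \times \Aut(E'_\eta))$, and the generic $j$-invariant on $X_1(6)$ lies outside $\{0, 1728\}$, giving $\Aut(E_\eta) = \{\pm 1\}$. The $-1$ involution on $S$ swaps the two branches at each $I_1$ node and, by the same flop-obstruction as in (a), fails to lift to $\hat X$. Hence $\varphi|_{A_\eta}$ is a translation, so $\varphi$ is determined by a section of each elliptic surface, and $\Aut(\hat X) \subset \MW(\nu) \times \MW(\phi \circ \nu) = (\Z/6\Z)^2$ by the Miranda-Persson table already invoked for Prop.~\ref{pic}. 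The reverse inclusion is immediate, since Mordell-Weil sections avoid the nodes of the singular fibers and the corresponding translations therefore extend through the small resolution.

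The main obstacle is the non-lifting argument, used twice above. It should reduce to a uniform local computation on the standard conifold $\{\xi\eta = uv\}$: an involution that preserves the node but swaps the two branches of one $I_1$ factor interchanges the two small resolutions, and hence cannot be an automorphism of a fixed projective small resolution $\hat X$.
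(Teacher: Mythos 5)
Your overall route is the same as the paper's: use the preceding proposition to reduce to automorphisms of the generic fiber $X_\eta = S_\eta\times_K S_\eta$, invoke non-isogeny to split the automorphism group into factors, and identify each factor with the Mordell--Weil group $\Z/6\Z$. Where you go beyond the paper is in flagging the two points it treats in one line: (i) in the transposition case the fibers $I_a\times I_b$ and $I_b\times I_a$ over the two swapped critical values are abstractly isomorphic, so "three non-isomorphic singular fibers" does not by itself kill the induced map on $\P^1$, and one must rule out the factor-swapping involution of $X$; and (ii) the automorphism group of a genus-one curve over $K$ with a rational point is $\MW\rtimes\{\pm 1\}$, not $\MW$, so the inversion involutions must be excluded. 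Both are genuine gaps in the paper's terse argument, and your proposed mechanism --- that these involutions interchange the two local small resolutions at some conifold point and hence carry $\hat X$ to a flop rather than to itself --- is the right kind of obstruction. Identifying these issues is the most valuable part of your write-up.

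That said, the obstruction is only sketched, and the sketch is aimed at the wrong place. The $I_1$ fibers sit over $-8$ and $\phi(-8)$, where the other elliptic factor is smooth; there are no conifold points there and the small resolution does not touch those fibers, so "$-1$ swaps the two branches at each $I_1$ node" obstructs nothing. The computation has to be carried out at the node-times-node points of the fibers $I_b\times I_{b'}$ with $b,b'\in\{2,3,6\}$, and there the combinatorics is not uniform: on $I_2$ inversion swaps the two nodes, on $I_3$ it fixes one node and swaps its branches, and on $I_6$ it fixes no node at all. So for a given involution one must either locate a fixed conifold point at which the two branches of one factor are exchanged (where the local model $(\xi_1,\eta_1,\xi_2,\eta_2)\mapsto(\eta_1,\xi_1,\xi_2,\eta_2)$ on $\{\xi_1\eta_1=\xi_2\eta_2\}$ does swap the ideals $(\xi_1,\xi_2)$ and $(\eta_1,\xi_2)$), or, when the conifold points are permuted without fixed points, compare the global choice of blown-up Weil divisors at a point and at its image. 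The factor-swap of the transposition case is even less clear locally, since $(\xi_1,\eta_1,\xi_2,\eta_2)\mapsto(\xi_2,\eta_2,\xi_1,\eta_1)$ preserves the ideal $(\xi_1,\xi_2)$; ruling it out requires the global bookkeeping, not just the local model. Finally, your criterion in (a) of "dimension of the singular locus of the fiber" is incorrect as stated --- all five singular fibers of $X$ are surfaces with one-dimensional singular locus (it is the threefold whose singularities are isolated) --- though the fibers over $\{\infty,0,1\}$ are easily distinguished from those over $\{-8,\phi(-8)\}$ by, say, their number of irreducible components or Euler characteristic. In short: same skeleton as the paper, a sharper diagnosis of what needs proving, but the decisive non-lifting step is not yet a proof.
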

\begin{proof}
Recall that $\pi$ has three non-isomorphic singular fibers, so any $\varphi\in Aut(\hat{X})$ satisfies $\pi = \pi\circ \varphi$.  This implies that $Aut(\hat{X}) = Aut(X_\eta) \simeq Aut(S_\eta) \times Aut(S_\eta)$, since the generic fiber is a product of non-isogenous elliptic curves.  As an elliptic curve over $K\simeq \C(t)$ with non-constant $j$-invariant, $S_\eta$ has automorphism group isomorphic to its group of $K$-points (Mordell-Weil group), which is $\Z/6\Z$ by \cite{mirandabasic}.
\end{proof}
\noindent
Therefore, the image of monodromy is a finite subgroup of $SL_2(\Z)$, so it is abelian of order $\leq 6$.  In particular, $L$ is not the vanishing cycle of a nodal degeneration.

\bibliographystyle{plain}
\bibliography{edonaldson}

\end{document}